\documentclass[11pt]{amsart}   
\pdfoutput=1

\setlength{\evensidemargin}{0.1in}
\setlength{\oddsidemargin}{0.1in}
\setlength{\textwidth}{6.3in}
\setlength{\topmargin}{0.0in}
\setlength{\textheight}{8.7in}

\setcounter{totalnumber}{50}
\setcounter{topnumber}{50}
\setcounter{bottomnumber}{50}

\usepackage{verbatim}
\usepackage{amssymb,amsthm,amsmath}
\usepackage{color}
\definecolor{darkblue}{rgb}{0, 0, .4}
\definecolor{grey}{rgb}{.7, .7, .7}

\usepackage{ifpdf}
\ifpdf
  \usepackage[pdftex]{epsfig}

  \usepackage{lscape}

  \usepackage[breaklinks]{hyperref}
  \hypersetup{
            colorlinks=true,
            linkcolor=darkblue,
            anchorcolor=darkblue,
            citecolor=darkblue,
            pagecolor=darkblue,
            urlcolor=darkblue,
            pdftitle={},
            pdfauthor={}
  }
\else
  \usepackage[dvips]{epsfig}
  \usepackage{lscape}

  \newcommand{\href}[2]{#2}
  \newcommand{\url}[2]{#2}
\fi


\newtheorem{theorem}{Theorem}[section]
\newtheorem{lemma}[theorem]{Lemma}

\theoremstyle{definition}
\newtheorem{definition}[theorem]{Definition}
\newtheorem{example}[theorem]{Example}

\theoremstyle{remark}
\newtheorem{remark}[theorem]{Remark}

\numberwithin{equation}{section}


\theoremstyle{theorem}
\newtheorem{corollary}[theorem]{Corollary}

\newcommand{\s}[0]{\sigma}

\usepackage{graphicx}
\input xy 
\xyoption{all}



\usepackage{tikz}
\usetikzlibrary{trees}
\usepackage{ifthen}

\newcommand{\ra}{\rightarrow}

\newcommand{\I}{\mathcal{I}}
\newcommand{\J}{\mathcal{J}}
\newcommand{\SN}{\mathfrak{S}}

\newcommand{\pf}[1]{\pi|_{[#1]}}

\renewcommand{\P}[0]{\mathcal{P}}

\newcommand{\q}{\rule{0.20cm}{0.15mm}}

\newcommand{\bchoose}[2]{\left[ { {#1} \atop {#2} } \right]}


\begin{document}

\title{Strategy-indifferent games of best choice}

\begin{abstract}
The game of best choice (or ``secretary problem'') is a model for making an
irrevocable decision among a fixed number of candidate choices that are
presented sequentially in random order, one at a time.  Because the classically
optimal solution is known to reject an initial sequence of candidates, a
paradox emerges from the fact that candidates have an incentive to position
themselves immediately after this cutoff which challenges the assumption that
candidates arrive in uniformly random order.

One way to resolve this is to consider games for which every (reasonable)
strategy results in the same probability of success.  In this work, we classify
these ``strategy-indifferent'' games of best choice.  It turns out that the
probability of winning such a game is essentially the reciprocal of the
expected number of left-to-right maxima in the full collection of candidate
rank orderings.  We present some examples of these games based on avoiding
permutation patterns of size $3$, which involves computing the distribution of
left-to-right maxima in each of these pattern classes.
\end{abstract}

\author[B. Jones]{Brant Jones}
\address{Department of Mathematics and Statistics, MSC 1911, James Madison University, Harrisonburg, VA 22807}
\email{\href{mailto:jones3bc@jmu.edu}{\texttt{jones3bc@jmu.edu}}}
\urladdr{\url{http://educ.jmu.edu/\~jones3bc/}}

\author[K. Kochalski]{Katelynn D. Kochalski}
\address{SUNY Geneseo}
\email{kochalski@geneseo.edu}

\author[S. Loeb]{Sarah Loeb}
\address{Hampden--Sydney College}
\email{sloeb@hsc.edu}

\author[J. Walk]{Julia C. Walk}
\address{Concordia College}
\email{jwalk@cord.edu}


\date{\today}

\maketitle

\section{Introduction}

The ``secretary problem'' or ``game of best choice'' is a model for sequential
decision making that was widely popularized in a 1960 column of Martin Gardner
(reprinted in \cite{gardner}) and has been generalized in many directions by
researchers working in probability and stochastic processes (see
\cite{gilbert--mosteller,ferguson,freeman} and the references therein).  In the
original setup, a player conducts interviews with a fixed number of candidates
with the goal of selecting the best.  After each interview, the player ranks
the current candidate against all of the candidates that have been considered
so far, obtaining a permutation.  The player must then decide whether to accept
the current candidate which ends the game or, alternatively, whether to reject
the current candidate forever and continue playing in hope of obtaining an even
better candidate in the future.  If we assume that the rank orders of the
candidates are distributed uniformly then it turns out that the optimal
strategy is to always reject an initial set of candidates, of size $N/e$ when
$N$ is large, and use them as a training set by selecting the next candidate
who is better than all of them (or the last candidate if no subsequent
candidate is better).  It can be shown that the probability of hiring the best
candidate out of $N$ with this strategy approaches $1/e$. 

More recently, researchers have begun applying the best-choice framework to
online auctions \cite{kleinberg08,kleinberg--etal} where the ``candidate
rankings'' are bids (that may arrive and expire at different times) and the
player must choose which bid to accept (ending the auction).  In this case,
there is an ``incentive paradox'' because candidates who know the optimal
strategy have a strong incentive to maneuver their bid away from the first
$1/e$ slots to avoid certain rejection, yet the permutation of candidate
rankings for each position were already assumed to be uniformly distributed.

Although there is a vast literature on the secretary problem, as far as we know
there are only two papers that address this issue.  In \cite{buchbinder--etal},
the authors use linear programming to give a {\em randomized mechanism on the
complete uniform distribution for which the probability of selecting a
candidate is independent of their position}.  While it avoids the incentive
paradox, this mechanism also has a lower probability of selecting the best
candidate than the classical solution.

Recently, in \cite{jones18} (see also \cite{fowlkes}), the first author of this
paper found, essentially by accident, a natural family of combinatorially
defined uniform distributions for which {\em every deterministic strategy
produces the same probability of success}.  This includes some subtle
non-positional strategies that are based on the relative ranking of the
candidates encountered during the interview process, and so we call such games
{\bf strategy-indifferent}.  These game variations avoid the incentive paradox
because any strategy could be employed by the interviewer (perhaps selected at
random), and it will necessarily be optimal.  So, as long as the interviewer's
strategy remains hidden, the candidates have no incentive to change their
interview position.  The asymptotic probability of success for the mechanisms
in each of these two (unrelated) papers is $1/4$, but in light of our work here
this seems to be merely coincidental.

Since the discovery of the $231$-avoiding game trees from \cite{jones18} with
their highly symmetric structure was surprising, and revealed only after a
combinatorial (as opposed to probabilistic) analysis of the game, we wondered
whether there were other examples of this phenomenon and whether it were
possible to define a game whose win probability was as high as the classical
game of best choice.

In this paper, we systematically examine the structure behind
strategy-indifferent games.  More precisely, in Section~\ref{s:games} we
explain how the strategy-indifferent games of best choice in rank $N$ are
classified by subsets of the symmetric group $\SN_{N-1}$ with win probabilities
determined by the mean number of left-to-right maxima per permutation in the
subset.  In Section~\ref{s:lrms} we provide some examples, including the
$231$-avoiding game from \cite{jones18}, a description of the largest
multiplicity-free strategy-indifferent game, and the win probabilities for the
games avoiding each of the other permutation patterns of size $3$.

Essentially, we find (see Corollary~\ref{c:const}) that the
strategy-indifferent games are those for which {\em the best candidate arrives
at one of the positions of a best-so-far candidate in the interview rank
ordering that would result if the interviewer rejected all but the last
candidate}.  Although they differ from the classical game of best choice (in
which the best candidate is equally likely to appear at any position), such
games do have a kind of uniformity in that the best candidate is equally likely
to appear at any {\em left-to-right maximum} position in the permutation
determined by the rankings of the competitors to the best candidate.  Whether
it is empirically true that best candidates should tend to occur concurrently
with a counterfactual best-so-far candidate is not known, but this could in
principle be tested and it does not seem unreasonable to us to imagine that, in
some contexts, the best candidate would tend to occur together with other
highly ranked candidates.  To the extent this is true, our results allow us to
recognize such games as strategy-indifferent.  One potential direction for
further analysis would be to develop an approximation theory for
strategy-indifferent games, in which one could measure precisely how failures
of the symmetry from our characterization affect the variance of win
probabilities among the strategies that a player could reasonably adopt.

As part of our analysis in Section~\ref{s:lrms}, we also compute the
distributions of left-to-right maxima among each of the sets of permutations
avoiding a single permutation pattern of size $3$ which, while not difficult,
may be of independent interest.  It turns out (see Figure~\ref{f:results}) that there
are three distinct equivalence classes of patterns, with nice relationships to
refinements of the Catalan numbers.  Thus, our work ties into a recent trend in
which various authors have investigated the discrete probability distribution
of some permutation statistic for a random model in which a pattern-avoiding
permutation is chosen uniformly at random. See, for example, \cite{miner-pak,
mp-312, dhw, fmw}.  Earlier, Wilf has collected some results on distributions
of left-to-right maxima for the full symmetric group in \cite{wilf} and
Prodinger \cite{prodinger} has studied these under a geometric random model.
Finally, this work also follows in a sequence of recent papers
\cite{weighted,milenkovic2,milenkovic1} studying generalized games of best
choice from the combinatorial perspective.

\section{Strategy-indifferent games}\label{s:games}

\subsection{Games of best choice}
Refer to \cite{bona} for an excellent introduction to permutation pattern
avoidance as well as the partially ordered set terminology we will be using.
We recommend \cite{jones18} for a more expository introduction to the game of
best choice.  Formally, each subset $\I$ of the symmetric group $\SN_N$ defines
a game of best choice as follows.  

\begin{definition}\label{d:pflat}
Given a sequence of $i$ distinct integers, we define its {\bf flattening} to be
the unique permutation of $\{1, 2, \ldots, i\}$ having the same relative order
as the elements of the sequence.  Given a permutation $\pi$, define the {\bf
$i$th prefix flattening}, denoted $\pf{i}$, to be the permutation obtained by
flattening the sequence $\pi_1, \pi_2, \ldots, \pi_i$.  In the {\bf game of
best choice} some $\pi \in \I$ is chosen uniformly randomly (with probability
$1/|\I|$) and each prefix flattening $\pf{1}, \pf{2}, \ldots$ is presented
sequentially to the player.  If the player stops at value $N$, they win;
otherwise, they lose.  We refer to $\I$ the set of {\bf interview rank
orderings} for the game.
\end{definition}

Observe that Definition~\ref{d:pflat} easily extends to the case where $\I$ is
a multiset, in which a given permutation may be contained multiple times.  In
this case, the ``uniform'' selection of an interview rank ordering from $\I$
can be used to implement any given discrete probability distribution on
$\SN_N$.  In the following results, we will continue to use traditional subset
notation for simplicity even though we do allow the possibility that $\I$
includes permutations with multiplicity.  The cardinality $|\I|$ of a multiset
$\I$ is the sum of the multiplicities of its elements.

Each strategy for this game can be expressed as a collection of prefixes we
call the {\bf strike set}.  To play the strategy, simply continue interviewing
candidates until one of the prefix flattenings for $\pi$ lies in the strike
set.  Then, select the current candidate and end the game.  It is clear that
every strategy can be represented as a strike set since the player has only the
relative ranking information captured in the prefix flattenings to guide their
stopping decision.

Recall that $\pi_j$ is a {\bf left-to-right maximum} in a permutation $\pi$ if
$\pi_i < \pi_j$ for all $i < j$.  In particular, $\pi_1$ and the entry
containing the value $N$ are always left-to-right maxima in any permutation.
Say that a prefix in $\bigcup_{i=1}^N \SN_i$ is {\bf eligible} if it ends in a
left-to-right maximum.  (Observe that it is never optimal to include a
non-eligible prefix in any strike set.)  We say that an eligible prefix $q$
{\bf contains} an eligible prefix $p$ if the first $i$ entries of $q$ are in
the same relative order as the entries of $p$, where $i$ is the size of $p$.
This defines a partial order on the set of eligible prefixes $\P_N$, which we
refer to as the {\bf prefix tree}.

The prefix tree encodes strategies for the game of best choice.  Specifically,
any optimal strike set must be a {\bf maximal antichain} in $\P_N$ (i.e. a
subset $A$ of prefixes having no pair such that one contains the other, and
such that every prefix in $\P_N$ contains or is contained in some element of
$A$).  The first few prefix trees are shown in Figure~\ref{f:pts} with their
recursive structure highlighted in bold.  Namely, the prefix tree $\P_N$ is
always obtained from the tree $\P_{N-1}$ by appending as leaves the $(N-1)!$
permutations in $\SN_N$ that end with $N$.

\begin{figure}[th]
\[ 
    \scalebox{0.6}{ \begin{tikzpicture}[grow=up]
     \tikzstyle{level 1}=[sibling distance=40mm]
    \node {1}
     child{ node {12} };
\end{tikzpicture}
} \hspace{0.25in}
\scalebox{0.6}{ \begin{tikzpicture}[grow=up]
     \tikzstyle{level 1}=[sibling distance=40mm]
     \tikzstyle{level 2}=[sibling distance=20mm]
    \node {\bf 1}
     child{ node {\bf 12}
       child{ node {123} 
   }
 }
       child{ node {213} };
\end{tikzpicture}
} \hspace{0.25in}
\scalebox{0.6}{ \begin{tikzpicture}[grow=up]
     \tikzstyle{level 1}=[sibling distance=40mm]
     \tikzstyle{level 2}=[sibling distance=20mm]
     \tikzstyle{level 3}=[sibling distance=10mm]
    \node {\bf 1}
     child{ node {\bf 12}
       child{ node {\bf 123} 
       child{ node{1234}}
   }
       child{ node{1324}}
       child{ node{2314}}
 }
       child{ node {\bf 213}
       child{ node{2134}}
        }
      child{ node{3124}}
      child{ node{3214}};
\end{tikzpicture}
}
\]
\caption{Prefix trees $\P_2, \P_3$ and $\P_4$}\label{f:pts}
\end{figure}

We say that a multiset $\I \subseteq \SN_N$ is {\bf strategy-indifferent} if it
defines a game of best choice for which {\em every} strike set, represented by
a maximal antichain, has the same probability of success.  In earlier work
\cite{fowlkes,jones18}, we observed that the subset of $231$-avoiding
permutations in $\SN_N$ is always strategy-indifferent with a win probability
of $\frac{C_{N-1}}{C_N} = \frac{N+1}{4N-2}$, where $C_N$ is the $N$th Catalan
number.  This was the only known example of a strategy-indifferent game prior
to this work.  

\subsection{Game calculus using the prefix tree}
Here, we explain how to compute the win probability for a strategy represented
by a particular strike set.

Observe that each permutation $\pi \in \I$ is winnable for a unique prefix $p$
and define the {\bf strike projection} $\s: \I \ra \P_N$ by $\s(\pi) = p$.
More precisely, $p$ is obtained from $\pi$ by flattening the entries up to and
including the value $N$.  The strike projection allows us to evaluate the
probability of a win using various strategies.  

\begin{lemma}\label{l:wins}
In a game of best choice defined by $\I$, the probability of winning when using the
strategy represented by a maximal antichain $A$ is $\frac{1}{|\I|} \sum_{p \in
A} |\s^{-1}(p)|$.
\end{lemma}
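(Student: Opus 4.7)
The plan is to show that, for each $\pi \in \I$, the player using strategy $A$ wins on $\pi$ if and only if $\s(\pi) \in A$. Once this equivalence is established, the winning permutations in $\I$ are precisely the disjoint union $\bigsqcup_{p \in A} \s^{-1}(p)$ (disjoint because $\s$ is a function), so the number of winners equals $\sum_{p \in A} |\s^{-1}(p)|$, and dividing by $|\I|$ (the denominator of the uniform distribution on the interview rank orderings) yields the stated win probability.

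To establish the equivalence, let $k$ denote the position of $N$ in $\pi$, so that $\s(\pi) = \pf{k}$ by definition. If the player wins, then by definition they must stop at step $k$, and the stopping rule forces $\pf{k} \in A$, proving one direction. Conversely, if $\pf{k} \in A$, then I must check that the player stops precisely at step $k$. A stop at a later position $i > k$ is impossible because $\pi_i$ is no longer a left-to-right maximum of $\pi$ (since $N$ has already appeared), so $\pf{i}$ fails to be eligible and cannot lie in $A \subseteq \P_N$. A stop at an earlier position $i < k$ would require $\pf{i} \in A$ with $\pi_i$ a left-to-right maximum of $\pi$, but then $\pf{i}$ is a prefix flattening of $\pf{k}$, hence a proper predecessor of $\s(\pi)$ in the prefix tree, contradicting the antichain property since both $\pf{i}$ and $\pf{k}$ would lie in $A$.

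The key observation powering the argument is that the eligible prefix flattenings of a fixed $\pi$ up through position $k$ form a chain in $\P_N$, so an antichain can meet this chain in at most one point; this is the step I expect to be the mildly subtle one, and it is the only place where the antichain hypothesis enters. Once it is in hand, the remaining bookkeeping — partitioning the winners by their strike projection, summing cardinalities of fibers, and dividing by $|\I|$ — is routine. I note in passing that maximality of $A$ is not actually invoked here, even though it is assumed in the statement; maximality will play its role elsewhere in ensuring that $A$ represents a full strategy rather than a premature commitment.
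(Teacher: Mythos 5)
Your proof is correct and follows essentially the same route as the paper: partition the winning orderings by their strike projection and observe the fibers are disjoint. The paper's own proof is terser and simply asserts that $|\s^{-1}(p)|$ counts the wins attributable to $p$, whereas you explicitly justify the equivalence ``win $\Leftrightarrow \s(\pi)\in A$'' via the antichain argument (and correctly note that maximality of $A$ is not needed here); this is a faithful elaboration rather than a different approach.
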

\begin{proof}
Since $|\s^{-1}(p)|$ is the number of interview rank orderings where the player
will select the best candidate when $p$ is included in their strike set, and
each of these preimages are disjoint, we have that the total number of wins for
a strike set $A$ is $\sum_{p \in A} |\s^{-1}(p)|$.  Dividing this sum by $|\I|$
therefore yields the win probability for the strategy represented by $A$.
\end{proof}

\subsection{Characterizing the strategy-indifferent games}
In a typical game theoretic analysis, we would fix $\I$ and try to determine
which strike set $A$ produces the maximal number of wins.  In this work,
however, we are characterizing the $\I$ multisets for which the number of wins
remains constant over {\em all} maximal antichains in $\P_N$.  

To state our main result, we associate terminology from the prefix tree $\P_N$
to multisets of $\SN_N$ via the strike projection $\s$.  For example, we say
that a subset $J \subseteq \SN_N$ {\bf is a maximal saturated chain} if the
strike projection restricts to a bijection from $J$ onto a maximal saturated
chain $\s(J)$ in the prefix tree poset (i.e. a set of prefixes that are totally
ordered by containment to which no prefix could be added without losing the
property of being totally ordered).  We also say that a multiset $\I \subseteq
\SN_N$ {\bf has a maximal chain partition} if it can be partitioned into
distinct subsets that are each maximal saturated chains.  If $\I$ uses repeated
elements, this means that the sum of the multiplicities in the partition for a
given element must equal the corresponding multiplicity for that element in
$\I$.  Recall that in a partial order, $q$ {\bf covers} $p$ if $q$ contains $p$
and no other element is contained between $q$ and $p$.  

\begin{theorem}\label{t:sn_main}
The following are equivalent:
\begin{enumerate}
    \item $\I$ defines a strategy-indifferent game of best choice.
    \item $\I$ has a maximal chain partition.
    \item $|\s^{-1}(p)| = \sum\limits_{\text{$q$ covers $p$}} |\s^{-1}(q)|$ for all non-maximal $p \in \P_N$.
\end{enumerate}
\end{theorem}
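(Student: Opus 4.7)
The plan is to close the loop $(1) \Rightarrow (3) \Rightarrow (2) \Rightarrow (1)$, exploiting the fact that $\P_N$ is a tree: each non-root eligible prefix has a unique predecessor, obtained by iteratively removing the last entry and flattening until the last entry is again a left-to-right maximum. Two consequences of this tree structure will be invoked throughout. First, replacing an element $p$ in a maximal antichain $A$ by the full set of its covers yields another maximal antichain, because in a tree the subtrees rooted at distinct covers of $p$ are disjoint from one another and from all elements of $A \setminus \{p\}$. Second, every maximal saturated chain in $\P_N$ is a root-to-leaf path, and it meets every maximal antichain in exactly one element (at least one, since the leaf of the chain must lie above some antichain element; at most one, since two meeting points would yield comparable elements in the antichain).

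For $(1) \Rightarrow (3)$, I would fix a non-maximal $p$ and first build a maximal antichain $A \ni p$ by starting with $\{1\}$ and iteratively expanding an ancestor of $p$ into its covers, terminating when $p$ itself has been added. The local swap above then yields another maximal antichain $A'$ differing from $A$ only by the replacement of $p$ with $\{q : q \text{ covers } p\}$, and strategy-indifference combined with Lemma~\ref{l:wins} gives $|\s^{-1}(p)| = \sum_{q \text{ covers } p} |\s^{-1}(q)|$. For $(2) \Rightarrow (1)$, the chain-meets-antichain property shows that $\sum_{p \in A} |\s^{-1}(p)|$ equals the number of parts in the chain partition, independently of $A$, so Lemma~\ref{l:wins} delivers a constant win probability.

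The main step is $(3) \Rightarrow (2)$. Iterating $(3)$ downward from any non-maximal $p$ to the leaves yields the telescoped identity $|\s^{-1}(p)| = \sum_\ell |\s^{-1}(\ell)|$, with the sum ranging over leaves $\ell$ descending from $p$. I would then assign to each root-to-leaf chain $C_\ell$ the multiplicity $|\s^{-1}(\ell)|$, so that by the telescoped identity the total number of scheduled chains passing through any $p$ equals $|\s^{-1}(p)|$. To realize the scheduled chains as sub-multisets of $\I$, I would choose independently at each node $p$ a bijective matching between its $|\s^{-1}(p)|$ preimages and the $|\s^{-1}(p)|$ chain-slots passing through $p$. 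I expect the main difficulty to lie in this last step: while the counts from $(3)$ make the node-by-node matchings possible, some care is needed to verify that these independent local choices genuinely stitch together into a valid multiset partition of $\I$ whose parts are precisely the scheduled maximal saturated chains.
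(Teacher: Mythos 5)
Your argument is correct, and while the $(1)\Leftrightarrow(3)$ step coincides with the paper's (both use the local replacement of a non-maximal $p$ in a maximal antichain by its set of covers, together with Lemma~\ref{l:wins}), your treatment of the other implications is genuinely different. The paper proves $(3)\Rightarrow(2)$ by a greedy peeling induction: it locates a containment-maximal $p$ with $|\s^{-1}(p)|>0$, argues via the weak increase of fiber sizes toward the root that a full root-to-leaf chain sits inside $\I$, removes it, and recurses. You instead build the entire partition in one pass: telescoping $(3)$ down to the leaves shows that assigning multiplicity $|\s^{-1}(\ell)|$ to each root-to-leaf chain $C_\ell$ puts exactly $|\s^{-1}(p)|$ chain-slots through each node $p$, and an arbitrary bijection at each node between the fiber $\s^{-1}(p)$ and these slots completes the construction. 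The stitching step you flag as delicate is in fact unproblematic: each $\pi\in\I$ lies in exactly one fiber and is matched to exactly one slot there, and each slot for $C_\ell$ receives exactly one element of $\I$ per node of $C_\ell$, which is precisely the paper's definition of a maximal saturated chain in $\I$; so no global compatibility condition remains to check. Your $(2)\Rightarrow(1)$ is also more direct than the paper's detour through $(3)$: the observation that every root-to-leaf chain meets every maximal antichain exactly once shows immediately that every strike set scores one win per part of the partition, which is arguably the most transparent explanation of why these games are strategy-indifferent. The paper's peeling argument has the mild advantage of not invoking the tree structure of $\P_N$ quite so explicitly, but both proofs are complete and of comparable length.
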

\begin{proof}
Suppose $\I$ has a maximal chain partition.  Then $\I$ has the form of a
disjoint union $J_1 \sqcup J_2 \sqcup \cdots \sqcup J_k$ where each $\s(J_i)$
is a maximal saturated chain in $\P_N$.  Therefore, whenever $p \in \s(J_i)$ is non-maximal,
we have $q \in \s(J_i)$ for precisely one $q$ covering $p$ in $\P_N$.  Hence,
\begin{equation}\label{e:pc}
|\s^{-1}(p)| = \sum\limits_{\text{$q$ covers $p$}} |\s^{-1}(q)|  \ \ \text{ for all non-maximal $p \in \P_N$ }
\end{equation}
holds for the restriction of $\s$ to each $J_i$, so it holds for the union
$\I$.  When $\I$ is a multiset, Equation~(\ref{e:pc}) is an equality of sums of
multiplicities and the result follows.

Conversely, suppose that Equation~(\ref{e:pc}) holds.  Among the prefixes $p$
such that $|\s^{-1}(p)| > 0$, choose one that is containment-maximal.  Observe
that this prefix $p$ must also be a maximal element of $\P_N$ or else the $q$
covering $p$ would all have $|\s^{-1}(q)| = 0$ whence $|\s^{-1}(p)|$ would be
$0$ by Equation~(\ref{e:pc}).  By the same equation, we have that the
$|\s^{-1}(r)|$ values are weakly increasing as we move down the poset, from the
maximal element $p$ back towards the root prefix $1$.  Hence, $\I$ must contain
a maximal saturated chain $\mathcal{C}$ starting with the maximal element $p$.
(Here, we are committing a slight abuse of notation in the sense that $p$ is
simultaneously a maximal prefix from $\P_N$ as well as an interview rank
ordering from $\I$.) Observe that if we remove this chain, replacing $\I$ by
$\I \setminus \mathcal{C}$ (interpreted as reducing the multiplicity of each
element from $\mathcal{C}$ by one in $\I$), then Equation~(\ref{e:pc}) will
still hold for $\s$ restricted to $\I \setminus \mathcal{C}$.  So, we may
remove the chain and work by induction on the size of $\I$ to eventually obtain
a maximal chain partition for $\I$.

Thus, we have shown that conditions (2) and (3) are equivalent.  We claim that
(1) and (3) are also equivalent.  If (3) fails for some non-maximal $p$, we
could add some prefixes $B$ to $p$ to create a maximal antichain.  Then, the
union of $B$ with the covers of $p$ would be a distinct strike set with a
different win probability, so $\I$ is not strategy-indifferent in this case.
Conversely, suppose that (3) holds for all $p$.  Given any maximal antichain
$A$, we can successively replace prefixes with the set of their covers without
changing the probability of winning.  Hence, every maximal antichain has the
same win probability as the particular antichain formed from the maximal
elements of $\P_N$, so $\I$ is strategy-indifferent.
\end{proof}

\subsection{Consequences of the Characterization Theorem}
In this section, we derive two important Corollaries of Theorem~\ref{t:sn_main}
that allow us to recast the strategy-indifferent games of best choice in a more intuitive way.

\begin{corollary}\label{c:form}
Consider a strategy-indifferent game of best choice represented by $\I \subseteq \SN_N$.
Let $\J = \{ \pi \in \I | \pi_N = N \}$.  Then, the win probability is given by ${|\J|}/{|\I|}$ which we can also write as
\[ \frac{\sum\limits_{\pi \in \J} 1}{\sum\limits_{\pi \in \J} \#\text{left-to-right maxima}(\pi)}. \]
\end{corollary}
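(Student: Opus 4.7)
The plan is to establish the two equalities in the corollary separately. First I prove that the win probability equals $|\J|/|\I|$; then I prove $|\I| = \sum_{\pi \in \J} \#\text{left-to-right maxima}(\pi)$. Once both hold, the displayed fraction follows immediately.

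For the first equality, I would exploit strategy-indifference by exhibiting one especially tractable strategy and computing its win probability directly. The maximal elements of $\P_N$ are precisely the permutations of length $N$ whose last entry is $N$, since that entry is automatically a left-to-right maximum. The set $A$ of all such maximal elements is itself a maximal antichain of $\P_N$, representing the ``reject every candidate and take the last one'' strategy. By the definition of $\s$, this strategy wins on exactly those $\pi \in \I$ with $\pi_N = N$, that is, on $\J$. By Lemma~\ref{l:wins} and the strategy-indifference hypothesis, the common win probability across all maximal antichains equals $|\J|/|\I|$.

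For the second equality, I would invoke Theorem~\ref{t:sn_main}(2) to express $\I$ as a disjoint union $J_1 \sqcup \cdots \sqcup J_m$ of maximal saturated chains. I then claim that the chains are in multiplicity-respecting bijection with $\J$, and that each $|J_i|$ equals the number of left-to-right maxima of the top element of $J_i$. For the bijection: every maximal saturated chain in $\P_N$ terminates at a maximal element of $\P_N$, which is a full-length permutation ending in $N$, and $\s$ restricts to the identity on such permutations; conversely, every copy of a permutation in $\J$ contained in $\I$ must appear as the top of exactly one chain. For the length count: writing $\pi$ for the top of $J_i$, the eligible prefixes contained in $\pi$ are exactly the flattenings $\pf{j}$ where $\pi_j$ is a left-to-right maximum of $\pi$, by the definitions of ``eligible'' and ``contains''. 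These flattenings form a single totally ordered chain whose size equals the number of left-to-right maxima of $\pi$.

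Summing $|J_i|$ over all chains yields $|\I| = \sum_{\pi \in \J} \#\text{left-to-right maxima}(\pi)$, and combining with part one gives the displayed formula. The step requiring the most care is verifying the saturation of each $\s(J_i)$: between two consecutive left-to-right maxima of the top element, every intermediate prefix length produces a flattening whose last entry fails to be a left-to-right maximum and is therefore not in $\P_N$, so no eligible prefix sits strictly between consecutive LRM-flattenings and they genuinely constitute cover relations. The rest of the argument is careful bookkeeping, with the small subtlety that when $\I$ is a multiset the bijection between chains and elements of $\J$ must respect multiplicities.
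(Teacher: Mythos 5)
Your proposal is correct and follows essentially the same route as the paper: decompose $\I$ into maximal saturated chains via Theorem~\ref{t:sn_main}, identify each chain's length with the number of left-to-right maxima of its top element (an element of $\J$), and evaluate the common win probability on the antichain of maximal elements of $\P_N$ using Lemma~\ref{l:wins}. Your write-up is somewhat more explicit than the paper's (e.g., in verifying saturation and the multiset bookkeeping), but the ideas are the same.
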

\begin{proof}
By Theorem~\ref{t:sn_main}, we have seen that every strategy-indifferent game
of best choice arises from a collection of permutations that can be decomposed
into maximal saturated chains.  Each chain can be represented by its maximal
element, which necessarily ends with value $N$.  Then, the maximal element in a
chain, say $\pi \in \J$, contains left-to-right maxima precisely at the ending
positions of each eligible prefix along the chain.  Consequently, $|\I|$ can be
computed by summing the number of left-to-right maxima in each $\pi \in \J$
(with multiplicity if necessary).  Since the collection $\s(\J)$ of maximal
elements from the maximal saturated chains forms an antichain in the prefix
tree, it is also a valid strike set with $|\J|$ wins.  Dividing these, we
obtain the win probability by Lemma~\ref{l:wins}.
\end{proof}

Since we can recover each maximal saturated chain in $\P_N$ from its maximal
element, we see that all of the numerical information from the strike
projection $\s : \I \rightarrow \P_N$ is actually already contained in the
sub-multiset $\J \subset \I$.  We formalize this as follows.  Say that two
games of best choice $\I, \I' \subseteq \SN_N$ are {\bf strategically
equivalent} if for all maximal antichains $A \subseteq \P_N$ we have that $A$
produces the same win probability for $\I$ as it does for $\I'$. 
We are now in position to define a central concept for this work.  

\begin{corollary}\label{c:const}
Every strategy-indifferent game of best choice is strategically equivalent to a
game $\I'$ arising from a multiset $\J'$ of $\SN_{N-1}$ by the following construction:
\begin{itemize}
\item[(1)] Let $\J$ be the multiset of permutations in $\SN_N$ obtained by appending $N$ to the end of each permutation in $\J'$, and
\item[(2)] for each $\pi \in \J$ and each left-to-right maximum position $i$ of $\pi$, add a new permutation $\widetilde{\pi}$ to $\I'$ such that $\widetilde{\pi}|_{[i]} = \pi|_{[i]}$ and $\widetilde{\pi}_i = N$.
\end{itemize}

Moreover, every multiset $\J' \subseteq \SN_{N-1}$ defines in this way a
strategy-indifferent game of best choice. 
\end{corollary}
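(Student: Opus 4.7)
The plan is to use Theorem~\ref{t:sn_main}, which classifies strategy-indifferent games by the existence of a maximal chain partition, together with Corollary~\ref{c:form}, which computes the common win probability as $|\J|/|\I|$. This reduces the task to two claims: first, the construction in the statement always yields a game with a natural maximal chain partition; second, when applied to the multiset $\J'$ extracted from a strategy-indifferent $\I$, the resulting $\I'$ matches $\I$ on both $|\J|$ and $|\I|$, hence on every maximal antichain.

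I would begin with the ``moreover'' clause, since it exposes the structure of the construction. Given any multiset $\J' \subseteq \SN_{N-1}$, form $\J$ and then $\I'$ as described. For each $\pi \in \J$, let $C_\pi$ denote the sub-multiset of the $\widetilde{\pi}$'s produced from $\pi$ as $i$ ranges over the left-to-right maximum positions of $\pi$. The defining conditions $\widetilde{\pi}|_{[i]} = \pi|_{[i]}$ and $\widetilde{\pi}_i = N$ show that $\s$ sends the element of $C_\pi$ associated with $i$ to the eligible prefix $\pi|_{[i]}$. As $i$ runs through the left-to-right maximum positions of $\pi$, these prefixes form a chain in $\P_N$ beginning with the root $1$ (at $i = 1$) and ending at $\pi$ itself (at $i = N$, which is a left-to-right maximum because $\pi_N = N$ by construction). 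The chain is saturated: if $i < i'$ are consecutive left-to-right maximum positions of $\pi$ and $q \in \P_N$ satisfies $\pi|_{[i]} \subsetneq q \subsetneq \pi|_{[i']}$, then $q$ must equal $\pi|_{[j]}$ for some $i < j < i'$, and eligibility of $q$ forces $j$ to be a left-to-right maximum position of $\pi$, contradicting the choice of $i, i'$ as consecutive. Hence $\s$ restricts to a bijection from $C_\pi$ onto a maximal saturated chain in $\P_N$, and $\{C_\pi\}_{\pi \in \J}$ is a maximal chain partition of $\I'$; Theorem~\ref{t:sn_main} then certifies that $\I'$ is strategy-indifferent.

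For the main claim, I would take a strategy-indifferent $\I$ and invoke Theorem~\ref{t:sn_main} to obtain a maximal chain partition $J_1 \sqcup \cdots \sqcup J_k$ of $\I$. Each maximal element of $\P_N$ is a permutation of $\SN_N$ ending in $N$, so collecting the maximal elements of the chains produces exactly the multiset $\J = \{\pi \in \I : \pi_N = N\}$. Stripping the trailing $N$ from each element of $\J$ yields a multiset $\J' \subseteq \SN_{N-1}$, and running the construction on this $\J'$ produces some $\I'$, which by the previous paragraph is strategy-indifferent and has the same multiset $\J$ of permutations ending in $N$ as $\I$ does. Corollary~\ref{c:form} then gives $|\I'| = \sum_{\pi \in \J} \#\text{left-to-right maxima}(\pi) = |\I|$, so both games share the common win probability $|\J|/|\I|$ on every maximal antichain, which is strategic equivalence. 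The hardest step will be the saturation verification in the previous paragraph, where the combinatorial content of the construction (adding precisely one $\widetilde{\pi}$ per left-to-right maximum position) has to be matched to the cover relations in $\P_N$, along with the multiset bookkeeping needed to ensure that repeated elements of $\J'$ produce independent copies of the corresponding chain in $\I'$.
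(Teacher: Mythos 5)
Your proposal is correct and follows essentially the same route as the paper: both extract $\J'$ from the permutations of $\I$ ending in $N$, invoke the maximal chain partition from Theorem~\ref{t:sn_main}, and observe that the construction produces exactly one maximal saturated chain per element of $\J$. The only (harmless) difference is the final step --- the paper concludes strategic equivalence by matching the $|\s^{-1}(p)|$ values of $\I$ and $\I'$ prefix by prefix, whereas you conclude it from the fact that both games are strategy-indifferent with the same $\J$ and hence, via Corollary~\ref{c:form}, the same common win probability; your explicit saturation check in the ``moreover'' clause supplies a detail the paper leaves implicit.
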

\begin{definition}
We call the multiset $\J' \subseteq \SN_{N-1}$, as in Corollary~\ref{c:const}, the {\bf
bine of competitors} for the associated strategy-indifferent game of best
choice.  
\end{definition}
At the level of the game, the permutations in this multiset represent
the possible relative interview rank orderings for all of the candidates that
are ``competing'' with the best candidate.  Structurally, these multisets give
us a canonical way to represent strategy-indifferent games, up to strategic
equivalence, and we can ``grow'' a full set of interview rank orderings $\I
\subseteq \SN_N$ from each such multiset $\J' \subseteq \SN_{N-1}$.

\begin{proof}
Given a strategy-indifferent game $\I$, we let 
\[ \J' = \{ \pi|_{[N-1]} \in \SN_{N-1} \text{ where } \pi \in \I \text{ and } \pi_N = N \}. \]  
We know $\I$ has a maximal chain partition by Theorem~\ref{t:sn_main} so we may
work one chain at a time to verify that, by construction, the game $\I'$ arising
from the bine of competitors $\J'$ will have the same $|\s^{-1}(p)|$ values as for the
original game with $\I$.  However, the precise choice of permutations for $\I'$
can vary from $\I$.  For example, the construction of $\widetilde{\pi}$ does not specify
which entries should occur in positions after the value $N$ nor which entries
should be used to effect the desired relative order in the positions before $N$
(because these choices are irrelevant for the game).  Since the $|\s^{-1}(p)|$
values represent the number of wins for a given prefix $p$, the two games are
strategically equivalent.

Moreover, given any multiset $\J'$ of $\SN_{N-1}$, the construction in the
statement will produce a collection of $|\J'|$ maximal saturated chains,
thereby creating a strategy-indifferent game by Theorem~\ref{t:sn_main}.
\end{proof}

Thus, the strategy-indifferent games of best choice are in one-to-one
correspondence with the multisets $\J'$ of $\SN_{N-1}$.  To help interpret
this, imagine that we described the classical game of best choice as follows:
given a uniformly selected permutation from $\SN_{N-1}$ representing the
relative interview rank orderings of all but the best candidate, a position is
selected uniformly from $\{1, \ldots, N\}$ in which to place the best
candidate.  This process clearly results in a uniform distribution on $\SN_N$.
In our case, for a given permutation from the bine of competitors (this is no
longer a uniform selection but weighted according to the number of
left-to-right maxima), the positions for the best candidate in a
strategy-indifferent game are selected uniformly from the set of left-to-right
maxima.

We now turn to the question of interpreting the permutations that arise in
$\I$.  One canonical way to choose permutations for the maximal saturated
chains that make up the $\I$ multiset, given a bine of competitors $\J'$, is to
iteratively apply the following map to each of the elements of $\J$.  This is
the unique construction that preserves the relative ordering of all but the
best candidate, so is especially suited to sets defined by pattern-avoidance
criteria, and we used a restricted version of it in our proof of the
$231$-avoiding result from \cite{jones18}.

\begin{definition}\label{d:map}
Given a permutation $\pi \in \SN_N$ with at least two left-to-right maxima, let
$\varphi$ be the result of placing $N$ in the position of the next to last
left-to-right maximum in $\pi$ and keeping all other values in the same
relative order.  
\end{definition}

Observe that this process is reversible:  if we plot the left-to-right maxima
of the permutation that results from removing $N$ from $\pi$ then we can place
$N$ in the position of the next left-to-right maximum to the right, keeping all
other values in the same relative order.  Consequently, if $\J'$ is
multiplicity-free then so is the $\I$ that is built using $\varphi$.
Regardless, the permutations that result from this construction always have the
property that {\em the entry immediately right of value $N$ is always larger in
value than all entries lying to the left of value $N$}. 

\begin{remark}\label{r:231}
It is interesting to note that avoiding the pattern $231$ automatically
enforces this condition without mentioning the value $N$ explicitly.
\end{remark}

\begin{remark}\label{r:map}
Another natural construction would be to place $N$ in the position of the next
to last left-to-right maximum by moving this entry to the last position, and
fixing all of the other entries, but we do not pursue this here.  
\end{remark}

By Corollary~\ref{c:form}, we find that the win probability for any
strategy-indifferent game can be computed from the number of left-to-right
maxima in the permutations from the bine of competitors $\J'$.  More precisely,
given a multiset $\J'$, define 
\[ E_\mathsf{LRM}(\J') = \frac{1}{|\J'|} \sum_{\pi \in \J'} \#\text{left-to-right maxima}(\pi), \]
the expected number of left-to-right maxima for a uniformly random element of
$\J'$ (so selected in proportion with its multiplicity if $\J'$ is a multiset).
Then we have the following attractive formulas.
\begin{lemma}\label{l:nums}
For the strategy-indifferent game $\I$ that uses $\J' \subseteq \SN_{N-1}$ as
its bine of competitors, we have that the win probability is
$1/(E_\mathsf{LRM}(\J') + 1) = 1/E_\mathsf{LRM}(\J)$ and $|\I| = |\J'| (E_\mathsf{LRM}(\J') + 1)$.
\end{lemma}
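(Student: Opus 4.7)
The plan is to assemble the formulas by combining Corollary~\ref{c:form} with Corollary~\ref{c:const} and a one-line observation relating the left-to-right maxima of $\pi \in \J$ to those of its restriction to $\SN_{N-1}$.

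First, I would fix notation in line with Corollary~\ref{c:const}: $\J$ is the multiset obtained from $\J'$ by appending $N$, so the map $\pi \mapsto \pi|_{[N-1]}$ is a multiplicity-preserving bijection $\J \to \J'$. In particular, $|\J| = |\J'|$. Next, I would invoke Corollary~\ref{c:const} directly to enumerate $|\I|$: the construction adds, for each $\pi \in \J$, exactly one permutation $\widetilde{\pi}$ per left-to-right maximum of $\pi$. Hence
\[ |\I| \;=\; \sum_{\pi \in \J} \#\text{left-to-right maxima}(\pi) \;=\; |\J|\,E_\mathsf{LRM}(\J). \]

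The key observation is that appending $N$ to any permutation $\sigma \in \SN_{N-1}$ produces a new left-to-right maximum at the last position and preserves all previous ones, so $\#\text{LRM}(\pi) = \#\text{LRM}(\pi|_{[N-1]}) + 1$ for every $\pi \in \J$. Averaging over the multiset $\J$ (equivalently, over $\J'$) gives $E_\mathsf{LRM}(\J) = E_\mathsf{LRM}(\J') + 1$. Substituting back yields $|\I| = |\J'|(E_\mathsf{LRM}(\J')+1)$, which is the second formula.

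For the first formula, Corollary~\ref{c:form} says the win probability equals $|\J|/|\I|$; plugging in the expression for $|\I|$ and using $|\J| = |\J'|$ gives
\[ \frac{|\J|}{|\I|} \;=\; \frac{|\J'|}{|\J'|\,(E_\mathsf{LRM}(\J')+1)} \;=\; \frac{1}{E_\mathsf{LRM}(\J')+1} \;=\; \frac{1}{E_\mathsf{LRM}(\J)}, \]
completing the proof. There is no real obstacle here; the only thing to be careful about is the multiset bookkeeping in the identity $E_\mathsf{LRM}(\J) = E_\mathsf{LRM}(\J')+1$, which must be read as summing with multiplicity and dividing by $|\J| = |\J'|$ rather than by the number of distinct underlying permutations.
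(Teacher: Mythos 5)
Your proof is correct and is essentially the paper's argument: the paper's one-line proof (``compare $E_\mathsf{LRM}(\J')$ with Corollary~\ref{c:form} and resolve the difference in rank between $\J$ and $\J'$'') is exactly the combination of $|\I| = \sum_{\pi \in \J} \#\text{LRM}(\pi)$ from Corollary~\ref{c:form} with the observation that appending $N$ adds one left-to-right maximum, which you have simply written out in full.
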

\begin{proof}
Compare $E_\mathsf{LRM}(\J')$ with Corollary~\ref{c:form} and resolve the
difference in rank between $\J$ and $\J'$.
\end{proof}

\begin{example}
As a synthetic example, suppose that $\J' = \{ {\dot 3}1{\dot 4}2, {\dot 1}{\dot 4}23, {\dot 2}{\dot 3}1{\dot 4} \}$, where we have marked the left-to-right maxima and $E_\mathsf{LRM}(\J') = 7/3$.
Then $\J = \{ 31425, 14235, 23145 \}$ and we may grow $\I$ using $\varphi$ as $\{ 31425, 31542, 53142 \} \cup \{ 14235, 15423, 51423 \} \cup \{ 23145, 23154, 25314,$ $52314 \} $.  Other alternatives for $\I$, following the construction in Corollary~\ref{c:const}, all have the form 
\[ \{ 31425,\  [21] 5 \q\ \q,\  5 \q\ \q\ \q\ \q \} \cup \{ 14235,\  [1] 5 \q\ \q\ \q,\  5 \q\ \q\ \q\ \q \} \cup \{ 23145,\  [231] 5 \q,\  [1] 5 \q\ \q\ \q,\  5 \q\ \q\ \q\ \q \} \]
where the values in brackets are specified only up to relative ordering.  
In any case, the prefix tree (with $|\s^{-1}(p)|$ values shown in parentheses) looks like
\[ 
\scalebox{0.75}{ \begin{tikzpicture}[grow=right]
     \tikzstyle{level 1}=[sibling distance=13mm]
     \tikzstyle{level 2}=[sibling distance=8mm]
     \tikzstyle{level 3}=[sibling distance=10mm]
     \tikzstyle{level 4}=[sibling distance=5mm]
    \node {$1 \atop (3)$}
     child{ node {$12 \atop (2)$}
       child{ node {$2314 \atop (1)$} 
       child{ node{{\bf $\mathbf{23145} \atop (1)$ }}}
   }
       child{ node{\bf $\mathbf{14235} \atop (1)$}}
 }
       child{ node {$213 \atop (1)$}
       child{ node{ {\bf $\mathbf{31425} \atop (1)$}}}
        };
\end{tikzpicture}
}
\]
and any strike set for this game will produce the same win probability, namely 
$\frac{1+1+1}{3+3+4} = \frac{3}{10} = \frac{1}{\frac{7}{3} + 1}$.
\end{example}

\section{Examples}\label{s:lrms}

To this point, we have focused on games in a fixed rank $N$.  The classical
game of best choice allows for any number of candidates, however, and the
analysis proceeds by considering the limiting strategies and win probabilities
as $N \ra \infty$.  So, we consider collections of subsets $\J_i' \subseteq
\SN_i$ ($i \geq 1$), that we use as bines of competitors to create
strategy-indifferent games of best choice in each rank $i+1$.  We are
particularly interested in examples for which $\lim_{N \ra \infty}
E_\mathsf{LRM}(\J_N')$ is a finite value, because when the expected value grows
without bound the win probability goes to $0$.  

A summary of our results are presented in Figure~\ref{f:results}.  These
prominently feature the Catalan numbers, which we are denoting $C_N$.  Here,
the third column records the number of permutations in $\J' \subseteq
\SN_{N-1}$ having exactly $k$ left-to-right maxima, yielding generalized
``Stirling triangles'' for sets of permutations defined by avoiding a single
pattern of size $3$ (see \cite{oeis}).  The entries in the second column are classical; see
\cite{bona}, for example.  The entries in the last column are obtained from the
entries in the penultimate column via Lemma~\ref{l:nums} and a limit.  One may
also use Lemma~\ref{l:nums} and our expressions for $E_\mathsf{LRM}(\J')$ to
compute $|\I|$ for each game.  In the remainder of this section, we justify the
other enumerations and equivalences.

\begin{figure}[h]
\renewcommand{\arraystretch}{2.2}
    \scalebox{0.75}{
\begin{tabular}{|l|l|l|l|l|}
    \hline 
    $\J'$ & $|\J'|$ & $\mathsf{LRM}$ multiplicities & $E_\mathsf{LRM}(\J')$ & asymptotic win probability \\
    \hline 
    \hline 
    $\SN_{N-1}$ & $(N-1)!$ & Stirling numbers (first kind) $\bchoose{N-1}{k}$ & $\sum_{i=1}^{N-1} \frac{1}{i} \approx \ln(N)$ & 0 \\
    \hline
    $123$-avoiding &  $C_{N-1}$ & two-columns: $(C_{N-2}, C_{N-1}-C_{N-2})$ & $2-\frac{C_{N-2}}{C_{N-1}} = \frac{7N-5}{4N-2} $ & 4/11 $\approx 36.4\%$ \\
    \hline
    \parbox{1.9in}{$231$-avoiding $\cong$ $132$-avoiding $\cong$ $213$-avoiding} & $C_{N-1}$ & Catalan ballot triangle {\bf (A033184)} & $\frac{C_N}{C_{N-1}} - 1 = \frac{3N}{N+2}$ & 1/4 $= 25\%$ \\
    \hline
    $321$-avoiding $\cong$ $312$-avoiding & $C_{N-1}$ & Catalan--Narayana triangle {\bf (A001263)} & $(2N-3) \frac{C_{N-2}}{C_{N-1}} = \frac{N}{2}$ & 0 \\
    \hline 
\end{tabular} }
\caption{Enumerative results for some strategy-indifferent games}\label{f:results}
\end{figure}

\subsection{$\J'$ is the full symmetric group $\SN_{N-1}$}

Here, the competitors of the best candidate may arrive in any order, and the
implied probability distribution will weight each permutation in proportion
with its number of left-to-right maxima.  The number of permutations in rank
$N-1$ with exactly $k$ left-to-right maxima is the Stirling number of the first
kind $\bchoose{N-1}{k}$; see e.g. \cite{wilf,renyi}.  Therefore, the expected
number of left-to-right maxima in rank $N-1$ is
\[ E_\mathsf{LRM}(\J') = \frac{1}{(N-1)!} \sum_{k=1}^{N-1} \bchoose{N-1}{k} k = \sum_{i=1}^{N-1} \frac{1}{i}, \]
the $(N-1)$st Harmonic number.  Although the second equality is well-known, it may also be derived from the following
result for which we include a bijective proof to give a self-contained treatment.

\begin{lemma}\label{l:comb}
We have 
\[ \sum_{i=1}^{N-1} (1+i) \left[{ {N-1} \atop i }\right] = (N-1)! \left(1 + \sum_{i=1}^{N-1} \frac{1}{i} \right).\]
\end{lemma}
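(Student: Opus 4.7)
My plan is a combinatorial double counting argument. Writing $n = N-1$ for brevity, I would first split the left-hand side as
\[ \sum_{i=1}^n (1+i) \bchoose{n}{i} = \sum_{i=1}^n \bchoose{n}{i} \;+\; \sum_{i=1}^n i \bchoose{n}{i}. \]
The first sum equals $n!$, since $\bchoose{n}{i}$ counts permutations in $\SN_n$ with exactly $i$ left-to-right maxima. The second sum equals $\sum_{\pi \in \SN_n} \#\text{left-to-right maxima}(\pi)$, which is the total number of ordered pairs $(\pi, j)$ where $\pi \in \SN_n$ and $j$ is a left-to-right maximum position of $\pi$. So the identity reduces to verifying
\[ \sum_{\pi \in \SN_n} \#\text{left-to-right maxima}(\pi) \;=\; n! \sum_{i=1}^n \frac{1}{i}. \]

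For this, I would count the same pairs $(\pi, j)$ by first fixing $j = k$. Let $A_k = \{\pi \in \SN_n : k \text{ is a left-to-right maximum position of } \pi\}$. Then the total pair count equals $\sum_{k=1}^n |A_k|$, so the lemma reduces to showing $|A_k| = n!/k$ for each $k \in \{1, \ldots, n\}$, since summing $n!/k$ over $k$ recovers the harmonic sum on the right-hand side.

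The heart of the argument is the following bijective observation. Consider the action of the cyclic group $\Z/k\Z$ on $\SN_n$ that cyclically rotates the first $k$ entries while fixing the remaining $n - k$ entries. Because all entries of a permutation are distinct, this action is free, so every orbit has size exactly $k$ and there are $n!/k$ orbits. Within each orbit, exactly one representative has the largest of its first $k$ entries placed at position $k$, namely the unique rotation that moves that largest entry there. Hence $|A_k| = n!/k$, as required. The only substantive step is this cyclic rotation argument, and it is brief and standard; everything else amounts to reorganizing the two sums and matching them up.
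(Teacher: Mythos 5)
Your proof is correct, but it takes a genuinely different route from the paper's. You split the left-hand side into $\sum_i \bchoose{n}{i} = n!$ plus the total count of left-to-right maxima over $\SN_n$, and then establish $|A_k| = n!/k$ by the cyclic-rotation orbit argument (which is sound: the rotation action on the first $k$ distinct entries is free, and exactly one representative per orbit places the maximum of the first $k$ entries at position $k$). This amounts to a direct proof of the classical fact that the expected number of left-to-right maxima in $\SN_n$ is the harmonic number $H_n$ --- precisely the ``well-known'' identity that the paper says the lemma \emph{can be used to derive}, rather than the other way around. The paper instead interprets the two sides as cardinalities of two subsets of $\SN_N$: the left side counts the $(2\text{-}N1)$-avoiding permutations produced by growing the game $\I$ from the bine $\J' = \SN_{N-1}$ via Definition~\ref{d:map}, and the right side counts permutations of $\SN_N$ with at most two left-to-right maxima; it then exhibits an explicit block-moving bijection between the two sets. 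Your argument is shorter and more elementary, and it isolates the probabilistic content ($1/k$ chance that position $k$ is a left-to-right maximum); the paper's argument is longer but stays self-contained within the game-theoretic framework and produces a bijection that connects the $|\I|$ count to a concrete permutation class. Either proof establishes the lemma.
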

\begin{proof}
Using Definition~\ref{d:map} to implement the construction in
Corollary~\ref{c:const}, the expression on the left counts the number of
permutations in $\SN_N$ having the property that the entry immediately right of
value $N$ is always larger in value than all entries lying to the left of value
$N$, which we refer to here as {\bf $(2-N1)$-avoiding}.  The expression on the
right counts permutations in $\SN_N$ with at most two left-to-right maxima by
summing over all of the possible positions for the value $N$.  More precisely,
suppose $N$ lies in some position $k > 1$.  Then, there are $(k-1)!$ total
permutations of the values in positions to the left of $N$, of which $(k-2)!$
will produce exactly two left-to-right maxima by virtue of having their maximal
value in the first position.  Thus, $\frac{(k-2)!}{(k-1)!} = \frac{1}{k-1}$ of
the $(N-1)!$ possible permutations with $N$ in position $k$ have precisely two
left-to-right maxima.  There is exactly one left-to-right maximum if and only
if $N$ lies in the first position and there are $(N-1)!$ permutations that meet
this condition.  The given formula follows.

To prove the equation, we provide a bijection between these sets.  We do not
need to worry about permutations that meet both conditions, so restrict to
permutations with exactly two left-to-right maxima, $y$ and $N$, with $N$
participating in a $(2-N1)$ instance.  
These have the form
\[ \dot{y} \underbrace{\cdots}_{<y} \dot{N} \underbrace{x_1 x_2 \cdots x_k}_{<y} \underbrace{\mathring{z}}_{>y} \cdots \]
for some $k \geq 1$.  (If $z$ does not exist, we just let $x_1 x_2 \cdots x_k$ be all the entries to the right of $N$.)

If we send this to 
\[ \underbrace{\dot{x_1} x_2 \cdots x_k}_{<y} \dot{y} \underbrace{\cdots}_{<y} \dot{N} \underbrace{\mathring{z}}_{>y} \cdots \]
we will have a $(2-N1)$-avoiding permutation with more than two left-to-right
maxima since $x_1$ becomes a new left-to-right maxima and some of the other
$x_i$'s may become left-to-right maxima (not shown).

Moreover, this process is reversible:  take the block up to the penultimate
left-to-right maximum and put it just after $N$.  Thus, we have a bijection and
the equality of counting formulas follows.
\end{proof}

Note that, by Lemma~\ref{l:nums}, we have that $|\I|$ is equal to either of the
expressions in Lemma~\ref{l:comb}.  This sequence begins $1, 2, 5, 17, 74,
\ldots$ and appears as {\bf (A000774)} in the OEIS \cite{oeis}.  Unfortunately,
since the harmonic series diverges, the reciprocal win probability is tending
to $0$.  This game does have the largest possible size $|\I|$ for any
strategy-indifferent game that does not repeat interview rank orderings and one
may use Definition~\ref{d:map} or Remark~\ref{r:map} to explicitly build such
an $\I$ set.

\subsection{$\J'$ is the subset of 123-avoiding permutations}

Avoiding the permutation $123$ requires the competitors to be decreasing in
quality as time passes, as could occur in a market with deteriorating extrinsic
conditions.  More precisely, these are the permutations with at most two
left-to-right maxima and, in order to be strategy-indifferent, the best
candidate must occur at one of these positions.

Although the permutations from $\J'$ have rank $N-1$, we apply counting
formulas in rank $N$ to relate our work with existing results from the
literature.  The number of $123$-avoiding permutations with a single
left-to-right maximum in rank $N$ is the same as the number of $123$-avoiding
permutations with $N$ in the first position, which is the same as the number of
$123$-avoiding permutations of rank $N-1$ which is the Catalan number
$C_{N-1}$.  Since there cannot be more than two left-to-right maxima in any
$123$-avoiding permutation, the number of $123$-avoiding permutations with
exactly two left-to-right maxima is therefore $C_N - C_{N-1}$.

Thus, the expected number of left-to-right maxima in rank $N$ is
\[ \frac{1}{C_{N}}( C_{N-1} + 2 (C_{N} - C_{N-1}) ) = 2 - \frac{C_{N-1}}{C_{N}} \]
and the win probability $1/(E_\mathsf{LRM}(\J')+1)$ is tending to $1/(2-1/4+1) =
1/(11/4) = 4/11 \approx 36\%$.  This is the highest win probability among the
strategy-indifferent games we have examined so far, and is nearly as good as
the classical $1/e$ win probability.

\subsection{$\J'$ is the subset of 231-avoiding permutations}\label{s:22}

As mentioned in the introduction, this example was treated in prior work.
Avoiding the permutation $231$ requires a sequence of interviews to be
``status-seeking'' in the sense that each time an interview is ranked more
highly than some previous interview, a floor is set for all future candidate
rankings.  Also, this pattern is unique in that avoiding it in the bine of
competitors actually carries over to the same avoidance criterion in the full
subset $\I$ of interview rank orderings.  In \cite[Section 3]{jones18}, the win
probability for this game was found to be $\frac{C_{N-1}}{C_N}$.  Applying
Lemma~\ref{l:nums} then gives a novel way to compute $E_\mathsf{LRM}$.
The bijection between the ``prefix trees'' in \cite[Section 3]{jones18}
preserves left-to-right maxima so this analysis applies to the bine of
competitors avoiding $132$ (although the resulting subset $\I$ is not
characterized by avoiding the same pattern).

In order to show that the $231$-avoiding bine of competitors is isomorphic to
the $213$-avoiding bine of competitors, we need a bijection that preserves
left-to-right maxima.  The idea is to consider the recursive structure that
defines each permutation class.  Given a $231$-avoiding permutation, we
partition the entries based on the value $k$ of the entry in the first
position, where every entry with values larger than $k$, which we call the {\bf
upper block}, lies to the right of every entry with values smaller than $k$,
which we call the {\bf lower block}.  These blocks themselves are (shifted)
$231$-avoiding permutations and each block has an initial element which can be
used to partition the block into upper and lower sub-blocks.  Repeating this
procedure produces a recursive decomposition of the $231$-avoiding permutation
that terminates when all of the (sub-)blocks are empty.  In a completely
similar way, we can decompose a $213$-avoiding permutation based on the value
of the entry in the first position but where every entry in the upper block
must now lie to the left of every entry in the lower block.  

Consequently, we define a function that fixes the initial element, interchanges
the positions of the upper and lower blocks, and recursively applies to each of
the blocks individually.  See Figure~\ref{f:mgf} for an illustration.  It is
clear that this operation is reversible and that it translates the pattern
criteria from $231$-avoiding in the domain to $213$-avoiding in the image.  It
is also not difficult to see that the values of the left-to-right maxima, which
are the initial elements of the upper blocks, remain unchanged.  Hence, the
strategy-indifferent game arising from the $231$-avoiding bine of competitors
is isomorphic to the game arising from the $213$-avoiding bine of competitors.

\begin{figure}[ht]
\begin{center}
\begin{tikzpicture}[scale=0.20]
    \draw[dashed] (0.95,12) -- (24,12);
    \draw (13,23) -- (23,23) -- (23,13) -- (13,13) -- (13,23);
    \draw (6,6) node {$B$};
    \draw (18,18) node {$A$};
    \draw (1,11) -- (11,11) -- (11,1) -- (1,1) -- (1,11);
    \draw [dotted] (0,0) -- (0,24) -- (24,24) -- (24,0) -- (0,0);
    \node at (0,12) [circle,inner sep=1pt,minimum size=1pt,draw,fill=lightgray!90] {$k$};
\end{tikzpicture}
\hspace{0.3in}
\begin{tikzpicture}[scale=0.20]
    \draw (0,11) node {$\longmapsto$};
    \draw (0,0) node {$ $};
\end{tikzpicture}
\hspace{0.3in}
\begin{tikzpicture}[scale=0.20]
    \draw[dashed] (0.95,12) -- (24,12);
    \draw (13,11) -- (23,11) -- (23,1) -- (13,1) -- (13,11);
    \draw (6,18) node {$f(A)$};
    \draw (18,6) node {$f(B)$};
    \draw (1,13) -- (1,23) -- (11,23) -- (11,13) -- (1,13);
    \draw [dotted] (0,0) -- (0,24) -- (24,24) -- (24,0) -- (0,0);
    \node at (0,12) [circle,inner sep=1pt,minimum size=1pt,draw,fill=lightgray!90] {$k$};
\end{tikzpicture}
\end{center}
\caption{Recursive bijection between $231$-avoiding and $213$-avoiding permutations that preserves (values of) left-to-right maxima}\label{f:mgf}
\end{figure}

\subsection{$\J'$ is the subset of 321-avoiding permutations}

Avoiding the permutation $321$ requires the competitors to be increasing in
quality as time passes, as could occur in a market with improving extrinsic
conditions.  More precisely, these are the permutations with at most two
left-to-right minima.  We also characterized these as ``disappointment-free''
in \cite{jones18}, in the sense that whenever a candidate ranks lower than an
earlier candidate, that earlier candidate's rank becomes a floor for all future
rankings.

Reviewing some material from \cite[Section 4]{fowlkes}, a $321$-avoiding
permutation is completely determined by the values and positions of its
left-to-right maxima, and these are encoded by Dyck paths.  Thus, the number of
left-to-right maxima in a particular permutation is equal to the number of
corners or ``peaks'' in the corresponding Dyck path.  The multiplicities for
these are well-known (e.g. \cite[Ex. 6.36]{ec2}) to be counted by the Catalan--Narayana triangle,
$T(N,k) = \frac{1}{k}{ {N-1} \choose {k-1} }{ N \choose {k-1} }$, having the
property that $\sum_{k=1}^N T(N,k)$ is the $N$th Catalan number $C_N$.

To compute the expected number of left-to-right maxima in rank $N$ as
\[ \frac{1}{C_N} \sum_{k=1}^N k T(N,k) = \frac{1}{C_N} \sum_{k=1}^N { {N-1} \choose {k-1} }{ N \choose {k-1} } = \frac{(2N-1)C_{N-1}}{C_N} = \frac{(2N-1)(N+1)}{2(2N-1)} = \frac{N+1}{2}, \]
it suffices to show that 
\begin{equation}
\label{e:321}
\sum_{k=1}^N { {N-1} \choose {k-1} }{ N \choose {k-1} } = { {2N-1} \choose N } 
\end{equation}
because
\[ { {2N-1} \choose N } = \frac{2N-1}{N} {{2N-2} \choose {N-1}} = (2N-1)C_{N-1}. \]

To see that Equation~(\ref{e:321}) holds, observe that both of these
expressions count the number of ways to put $N$ indistinguishable stars into
$N$ distinguishable bins.  The expression on the right results from the classic
``stars and bars'' argument.  Namely, it counts the number of ways to select
$N$ positions for ``stars'' from the $2N-1$ positions total, where we let the
rest of the positions be $N-1$ ``bar'' separators between the $N$ bins.  The
expression on the left considers the first $N$ positions separately from the
last $N-1$ positions.  Initially, when $k-1 = 0$, we have the configuration of
all stars followed by all bars (so all of the stars belong to the first bin).
For positive values of $k-1$, we count the configurations where $k-1$ stars and
$k-1$ bars have been interchanged from one set of positions to the other.  In
this way, we are also considering every possible string of $N$ stars and $N-1$
bars.

Finally turning to the $312$-avoiding permutations, similar remarks to
\cite[Section 4]{fowlkes} apply.  Namely, once we know the values and positions
of the left-to-right maxima in a $312$ avoiding permutation, all of the
complementary values must appear in decreasing order.  So these are again
encoded by Dyck paths, and our analysis above applies verbatim.

\bigskip
\section*{Acknowledgements}

This project was begun at the 2019 Research Experiences for Undergraduate
Faculty (REUF) program at the Institute for Computational and Experimental
Research in Mathematics (ICERM), supported by the National Science Foundation
through DMS grant 1620080.  We are grateful to everyone at REUF / ICERM for
catalyzing and supporting this project.  We also thank Alex Burstein for advice
regarding the bijection in Section~\ref{s:22}.


\bibliographystyle{amsalpha}
\bibliography{bestchoice}

\providecommand{\bysame}{\leavevmode\hbox to3em{\hrulefill}\thinspace}
\providecommand{\MR}{\relax\ifhmode\unskip\space\fi MR }
\providecommand{\MRhref}[2]{%
  \href{http://www.ams.org/mathscinet-getitem?mr=#1}{#2}
}
\providecommand{\href}[2]{#2}
\begin{thebibliography}{FMW07}

\bibitem[B\'12]{bona}
Mikl\'os B\'ona, \emph{Combinatorics of permutations}, second ed., Discrete
  Mathematics and its Applications (Boca Raton), CRC Press, Boca Raton, FL,
  2012, With a foreword by Richard Stanley.

\bibitem[BIKR08]{kleinberg08}
M.~Babaioff, N.~Immorlica, D.~Kempe, and Kleinberg R., \emph{Online auctions
  and generalized secretary problems}, SIGecom Exchange \textbf{7} (2008),
  1--11.

\bibitem[BJS14]{buchbinder--etal}
Niv Buchbinder, Kamal Jain, and Mohit Singh, \emph{Secretary problems via
  linear programming}, Math. Oper. Res. \textbf{39} (2014), no.~1, 190--206.

\bibitem[DHW03]{dhw}
Emeric Deutsch, A.~J. Hildebrand, and Herbert~S. Wilf, \emph{Longest increasing
  subsequences in pattern-restricted permutations}, Electron. J. Combin.
  \textbf{9} (2002/03), no.~2, Research paper 12, 8, Permutation patterns
  (Otago, 2003).

\bibitem[Fer89]{ferguson}
Thomas~S. Ferguson, \emph{Who solved the secretary problem?}, Statist. Sci.
  \textbf{4} (1989), no.~3, 282--296, With comments and a rejoinder by the
  author.

\bibitem[FJ19]{fowlkes}
Aaron Fowlkes and Brant Jones, \emph{Positional strategies in games of best
  choice}, Involve \textbf{12} (2019), no.~4, 647--658.

\bibitem[FMW07]{fmw}
Ghassan Firro, Toufik Mansour, and Mark~C. Wilson, \emph{Longest alternating
  subsequences in pattern-restricted permutations}, Electron. J. Combin.
  \textbf{14} (2007), no.~1, Research Paper 34, 17.

\bibitem[Fre83]{freeman}
P.~R. Freeman, \emph{The secretary problem and its extensions: a review},
  Internat. Statist. Rev. \textbf{51} (1983), no.~2, 189--206.

\bibitem[Gar95]{gardner}
Martin Gardner, \emph{New mathematical diversions}, revised ed., MAA Spectrum,
  Mathematical Association of America, Washington, DC, 1995.

\bibitem[GM66]{gilbert--mosteller}
John~P. Gilbert and Frederick Mosteller, \emph{Recognizing the maximum of a
  sequence}, J. Amer. Statist. Assoc. \textbf{61} (1966), 35--73.

\bibitem[Jon19]{jones18}
Brant Jones, \emph{Avoiding patterns and making the best choice}, Discrete
  Math. \textbf{342} (2019), no.~6, 1529--1545.

\bibitem[Jon20]{weighted}
\bysame, \emph{Weighted games of best choice}, SIAM J. Discrete Math.
  \textbf{34} (2020), no.~1, 399--414.

\bibitem[KKN15]{kleinberg--etal}
Thomas Kesselheim, Robert Kleinberg, and Rad Niazadeh, \emph{Secretary problems
  with non-uniform arrival order}, S{TOC}'15---{P}roceedings of the 2015 {ACM}
  {S}ymposium on {T}heory of {C}omputing, ACM, New York, 2015, pp.~879--888.

\bibitem[LM20]{milenkovic2}
Xujun Liu and Olgica Milenkovic, \emph{Finding the second-best candidate under
  the {M}allows model}, {\tt arXiv:2011.07666} (2020).

\bibitem[LMM21]{milenkovic1}
Xujun Liu, Olgica Milenkovic, and George~V. Moustakides, \emph{Query-based
  selection of optimal candidates under the {M}allows model}, {\tt
  arXiv:2101.07250} (2021).

\bibitem[MP14]{miner-pak}
Sam Miner and Igor Pak, \emph{The shape of random pattern-avoiding
  permutations}, Adv. in Appl. Math. \textbf{55} (2014), 86--130.

\bibitem[MP16]{mp-312}
Neal Madras and Lerna Pehlivan, \emph{Structure of random 312-avoiding
  permutations}, Random Structures Algorithms \textbf{49} (2016), no.~3,
  599--631.

\bibitem[Pro02]{prodinger}
Helmut Prodinger, \emph{Combinatorics of geometrically distributed random
  variables: value and position of large left-to-right maxima}, Discrete Math.
  \textbf{254} (2002), no.~1-3, 459--471.

\bibitem[R\'62]{renyi}
Alfr\'{e}d R\'{e}nyi, \emph{Th\'{e}orie des \'{e}l\'{e}ments saillants d'une
  suite d'observations}, Ann. Fac. Sci. Univ. Clermont-Ferrand \textbf{8}
  (1962), 7--13.

\bibitem[Slo]{oeis}
\emph{{OEIS} {F}oundation {I}nc. {T}he {O}n-{L}ine {E}ncyclopedia of {I}nteger
  {S}equences}, \url{http://oeis.org}.

\bibitem[Sta99]{ec2}
Richard~P. Stanley, \emph{Enumerative combinatorics. {V}ol. 2}, Cambridge
  Studies in Advanced Mathematics, vol.~62, Cambridge University Press,
  Cambridge, 1999, With a foreword by Gian-Carlo Rota and appendix 1 by Sergey
  Fomin.

\bibitem[Wil95]{wilf}
Herbert~S. Wilf, \emph{On the outstanding elements of permutations},
  \url{https://www2.math.upenn.edu/~wilf/website/outstelmts.pdf} (1995).

\end{thebibliography}

\end{document}